\documentclass[11pt,a4paper]{amsart}

\usepackage[T1]{fontenc}
\usepackage[utf8]{inputenc}
\usepackage{lmodern}
\usepackage{microtype}
\usepackage{amsmath,amssymb,amsthm,mathtools}
\usepackage[margin=30mm]{geometry}
\usepackage{xcolor}
\usepackage[colorlinks=true,linkcolor=black!65!black,citecolor=black!65!black,urlcolor=black!65!black]{hyperref}

\theoremstyle{plain}
\newtheorem{theorem}{Theorem}[section]
\newtheorem{proposition}[theorem]{Proposition}
\newtheorem{lemma}[theorem]{Lemma}
\newtheorem{corollary}[theorem]{Corollary}
\theoremstyle{definition}
\newtheorem{definition}[theorem]{Definition}
\theoremstyle{remark}
\newtheorem{remark}[theorem]{Remark}

\newcommand{\CL}{\operatorname{CL}}
\newcommand{\W}{\tau_{W}}
\newcommand{\V}{\tau_{V}}
\title[Baire Wijsman hyperspaces]{On baireness of the Wijsman topology}
\author{Basile Beyer de Ryke}

\address{Independent researcher, 12 Avenue Hamoir, 1180 Uccle, Belgium}
\email{basile.beyer@gmail.com}
\date{August 2026}
\subjclass[2020]{Primary 54B20, 54E52; Secondary 54E35, 54E50}
\keywords{Wijsman topology, ball topology, Vietoris topology, Baire space, hyperspace, totally bounded set}

\begin{document}

\begin{abstract}
We give negative answers to Questions~4.4 and~4.5 in Cao's survey on the
Wijsman topology. We construct a metrizable space $Y$ and a
compatible metric $d$ such that every finite power $Y^n$ is Baire, while
$(\CL(Y),\tau_W(d))$ is not Baire.  The construction realizes the Vietoris
hyperspace of a suitable metric space as an open subspace of a ball
hyperspace.

We also study the set $S_d$ of points having an open
$\sigma$-$d$-totally bounded neighborhood.  We prove that Baireness of
$(\CL(X),\tau_W(d))$ implies Baireness of $S_d$, and that, whenever $S_d$ is
dense in $X$,
$$
(\CL(X),\tau_W(d))\text{ is Baire}
\quad\Longleftrightarrow\quad
X\text{ is Baire}
\quad\Longleftrightarrow\quad
S_d\text{ is Baire}.
$$
As consequences, we obtain complete characterizations for locally
$\sigma$-$d$-totally bounded metric spaces, countable metric spaces, and
$\sigma$-compact metrizable spaces.  In our counterexample, $S_e$ coincides with a non-dense discrete component for every compatible metric $e$, showing that the density assumption is essential.
\end{abstract}

\maketitle

\section{Introduction}

Let $(X,d)$ be a metric space and let $\CL(X)$ denote the set of all
nonempty closed subsets of $X$.  The Wijsman topology $\tau_W(d)$ is defined as the weakest topology such that each function of the family
$$
A\longmapsto d(x,A)=\inf_{a\in A}d(x,a),\qquad x\in X.
$$
is continuous.

The study of Baireness in hyperspaces has an earlier counterpart for the
Vietoris topology.  Cao, Garc\'ia-Ferreira and Gutev proved that if the
Vietoris hyperspace of a space $X$ is Baire, then every finite power $X^n$ is
Baire \cite{CaoGarciaFerreiraGutev2007}; the proof of this result was
subsequently corrected in an erratum \cite{CaoGarciaFerreiraGutev2008}.
Cao and Tomita later showed that if $X$ is quasi-regular and $X^{\aleph_0}$ is
Baire, then its Vietoris hyperspace is Baire, while Baireness of all finite
powers is not sufficient \cite{CaoTomita2007}.  Related Vietoris Baireness
questions were also studied by Garc\'ia-Ferreira and Gutev through
web-adjacent spaces and a construction motivated by Krom
\cite{GarciaFerreiraGutev2009}.  These results provide the natural
finite-versus-countable-power background for the corresponding questions in
the Wijsman setting.

Against this background, Cao's Problem~4.1 asks for conditions on $(X,d)$
which characterize when $(\CL(X),\tau_W(d))$ is Baire \cite{Cao2017}.  Two
natural subquestions are particularly fundamental.  Question~4.4 asks whether
Baireness of every finite power $X^n$ is sufficient, while Question~4.5,
attributed there to Zsilinszky, asks whether Baireness of $X$ itself is
sufficient.

Our first theorem provides simultaneous negative answers to these two questions.

The proof starts from the Cao--Tomita example of a metrizable space $X$ whose
finite powers are Baire while $(\CL(X),\tau_V)$ is not Baire
\cite{CaoTomita2007}.  We attach one auxiliary point $p_F$ for each
$F\in\CL(X)$ and one additional point $q$.  The metric is chosen so that a
radius-one closed ball about $p_F$ has trace exactly $F$ on $X$, while a
radius-one ball about $q$ removes all auxiliary points simultaneously.  This
realizes the Vietoris hyperspace as an open subspace of a ball hyperspace.
Zsilinszky's equivalence between Baireness of the ball and Wijsman topologies
then transfers non-Baireness to the Wijsman hyperspace.

Cao proved that Baireness of $X^{\aleph_0}$ implies Baireness of the
Wijsman hyperspace for every compatible metric \cite{Cao2010}.  Thus the
countable-power hypothesis in Cao's sufficient condition cannot be replaced
by the Baireness of all finite powers.  Indeed, the space constructed here
satisfies
$$
Y^n\text{ is Baire for every }n<\aleph_0,
\qquad
Y^{\aleph_0}\text{ is not Baire}. 
$$
Our second theorem identifies a large region in which Baireness of the
base space is equivalent to the Baireness of the Wijsman hyperspace.  

The proof produces a fixed-metric obstruction: a closed meagre
$\sigma$-$d$-totally bounded subset of $X$ cannot have nonempty interior when
the Wijsman hyperspace is Baire.  The equivalence holds, in
particular, for locally $\sigma$-$d$-totally bounded spaces, $\sigma$-compact
metrizable spaces for every compatible metric, and countable metric spaces.

We finally determine the core of the counterexample exactly.  A direct
analysis of the Cao--Tomita space shows that none of its nonempty open
subsets is separable.  Consequently, for every compatible metric $e$ on $Y$,
the core $S_e$ is precisely the discrete auxiliary component $D$.  For the
metric $d$ used in the construction,
$$
S_d=D=\overline B_d(q,1),
$$
and $d|_{S_d}$ is complete, whereas $(\CL(Y),\tau_W(d))$ is not Baire.
This shows that the converse of the local core theorem fails even for a
clopen completely metrizable core, and that the density assumption in the
dense-core characterization cannot be omitted.

\section{Preliminaries}

For $U\subseteq X$, put
$$
U^-:=\{A\in\CL(X):A\cap U\neq\varnothing\},
\qquad
U^+:=\{A\in\CL(X):A\subseteq U\}.
$$
When the ambient space needs to be indicated explicitly, we write
$$
U^-_Z:=\{A\in\CL(Z):A\cap U\neq\varnothing\},
\qquad
U^+_Z:=\{A\in\CL(Z):A\subseteq U\},
\qquad U\subseteq Z.
$$
The Vietoris topology $\tau_V$ is generated by the sets $U^-$ and $U^+$,
where $U$ ranges over the open subsets of $X$.

For $x\in X$ and $r>0$, write
$\overline B_d(x,r)=\{y\in X:d(x,y)\leq r\}$.  Let $\mathcal K_d$ be the
family of all finite unions of closed $d$-balls, with the empty union included.
The ball topology $b_d$ on $\CL(X)$ is generated by the hit sets $U^-$, for
$U$ open, and the miss sets
$$
(K^c)^+=\{A\in\CL(X):A\cap K=\varnothing\},\qquad K\in\mathcal K_d.
$$
Restricting the definition to closed balls that are proper subsets of $X$ gives
the same topology: a nonproper closed ball is $X$, and its miss set is empty.

Recall that $X$ is \emph{almost locally separable} if the points possessing a
separable neighborhood form a dense subset of $X$.  We use two theorems of
Zsilinszky.

\begin{theorem}\label{thm:zsilinszky}
Let $(X,d)$ be a metric space.
\begin{enumerate}
\item $(\CL(X),\tau_W(d))$ is Baire if and only if $(\CL(X),b_d)$ is Baire.
\item If $X$ is almost locally separable, then $X$ is Baire if and only if
      $(\CL(X),\tau_W(e))$ is Baire for every compatible metric $e$ on $X$.
\end{enumerate}
\end{theorem}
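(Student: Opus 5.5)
Theorem~\ref{thm:zsilinszky} is quoted from Zsilinszky's work, so the plan is mainly to reconstruct the two arguments in outline rather than to find a new proof.

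\emph{Part (1).} Since $d(x,\cdot)$ is $b_d$-continuous, we have $\tau_W(d)\subseteq b_d$. Concretely, $\{A:d(x,A)>r\}$ is a union of miss sets of closed balls $\overline B_d(x,s)$ with $s>r$, and $\{A:d(x,A)<r\}=B(x,r)^-$. I will compare Baireness through a common $\pi$-base style argument. The key observation is that every nonempty basic $b_d$-open set
\[
\mathcal U=U_1^-\cap\dots\cap U_m^-\cap (K^c)^+
\]
contains a nonempty $\tau_W$-open set. To see this, pick $A\in\mathcal U$ finite, with points $a_i\in U_i\setminus K$. Because finitely many closed balls with positive distance from a finite set can be shrunk a little, one finds small open balls around the $a_i$ together with Wijsman conditions $d(x_j,A)>r_j$ for the centers $x_j$ of the balls of $K$.

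Conversely, a nonempty Wijsman basic set contains a nonempty $b_d$-open set. Combining the two directions, the identity map is feebly continuous and feebly open in both directions. The standard fact that Baireness transfers between two topologies on the same set sharing a dense family of mutually nested open sets then gives (1).

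The delicate point in this part is that $d(x,A)>r$ does not imply that $A$ misses a closed ball of radius greater than $r$ in a uniform way. I will handle this by working only with finite $A$, which are $b_d$-dense, so that strict inequalities persist.

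\emph{Part (2).} For the direction ``hyperspace Baire $\Rightarrow$ $X$ Baire'', I will show that if $X$ is not Baire, then some nonempty open $W$ is meagre. Using almost local separability, I shrink $W$ to a separable open set with a countable dense set, and build nowhere dense sets $\{A: A\cap F_n\ne\varnothing\}$-type pieces in the Wijsman hyperspace. These cover the nonempty open set $W^-$ up to a meagre set, which requires the separability in order to control distance functionals by countably many centers.

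For the direction ``$X$ Baire $\Rightarrow$ hyperspace Baire for all $e$'', I will play the Banach--Mazur game. Nonempty player moves are reduced to finite sets in $\tau_W(e)$ basic neighborhoods. On the separable dense part, I use a countable base and a diagonal enumeration so that the limit closed set satisfies every distance constraint. Baireness of $X$ is invoked coordinatewise on locally separable open pieces, together with the Oxtoby-type fact that locally separable Baire metric spaces admit winning strategies on dense pieces.

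I expect the main obstacle to be this last direction, namely passing from Baireness of $X$ alone (not of its powers) to the hyperspace. The first thing I would try is to reduce to separable metric Baire pieces, where Baireness of $X$ already implies that all finite powers, and even $X^{\aleph_0}$, are Baire by the separable product theorem. I would then apply Cao's countable-power criterion \cite{Cao2010} locally, and patch the local pieces using almost local separability.
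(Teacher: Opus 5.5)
Your Part (1) is fine. You have $\tau_W(d)\subseteq b_d$, and every nonempty basic $b_d$-open set contains a finite $A$ together with a Wijsman neighborhood $\bigcap_i B_d(a_i,\varepsilon)^-\cap\bigcap_j\{B:d(x_j,B)>r_j\}$. Hence the nonempty $\tau_W(d)$-open sets form a $\pi$-base for $b_d$, and the two topologies have the same meagre sets.

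The genuine gap is in the half of (2) that derives Baireness of $X$. Your sketch never uses the quantifier over compatible metrics and works with the given $d$. With the given metric, the hit sets $F_n^-$ of closed nowhere dense sets need not be nowhere dense in $b_d$, and this already fails in suitable separable subspaces of $\ell^\infty$. The reason is that $(F_n^c)^+$ is not ball-open: a nonempty basic ball-open set misses $F_n^-$ only if its miss part is a finite union of closed balls covering $F_n$. Producing such covers at every scale, while still leaving room for prescribed points, is what total boundedness of $F_n$ provides. Separability gives countably many centers, not finite covers.

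The missing idea is Zsilinszky's, as described in the Remark after Proposition~\ref{prop:sigma-obstruction}: change the metric.
\begin{itemize}
\item Take a nonempty open $V$ with $\overline V$ separable and contained in the meagre open set.
\item Put a totally bounded compatible metric on the closed set $\overline V$ and extend it to a compatible metric $e$ on $X$ by Hausdorff's extension theorem.
\item Run the argument of Proposition~\ref{prop:sigma-obstruction} for $C=\overline V$ and $e$, combined with (1).
\end{itemize}

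The other half of (2) also has two problems. First, the ``Oxtoby-type fact'' you invoke is false. Baireness says only that the player aiming at an empty intersection has no winning strategy; it gives no winning strategy to the nonempty player. A Bernstein subset of $\mathbb R$ is a separable Baire space in which the nonempty player has none.

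Second, your fallback leaves the essential step unexplained. The Wijsman hyperspaces of the separable pieces $L_\alpha$ do not assemble into $(\CL(X),\tau_W(e))$:
\begin{itemize}
\item $\{A\in\CL(X):A\subseteq L_\alpha\}$ is not $\tau_W(e)$-open in general;
\item closed sets meet many pieces as well as the non-locally-separable remainder;
\item $\tau_W(e)$ sees distances between pieces.
\end{itemize}

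The patching does work at the level of the countable power, after which Cao's theorem is applied once, globally.
\begin{itemize}
\item The set $L$ of points of local separability is open, dense and Baire. By Alexandroff's theorem it is a topological sum $\bigoplus_\alpha L_\alpha$ of separable Baire spaces.
\item A countable-closure argument shows that a winning strategy for the empty player in the Banach--Mazur game on $L^{\aleph_0}$ would yield one on $\bigl(\bigoplus_{\alpha\in I}L_\alpha\bigr)^{\aleph_0}$ for some countable $I$. This contradicts Oxtoby's product theorem.
\item $L^{\aleph_0}$ is dense in $X^{\aleph_0}$, and a space with a dense Baire subspace is Baire, so $X^{\aleph_0}$ is Baire.
\item Cao's theorem \cite{Cao2010} then applies to $X$ itself, for every compatible $e$.
\end{itemize}
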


These are Theorems~1.1 and~2.3, respectively, of \cite{Zsilinszky2007}.

\begin{theorem}\label{thm:seed}
There is a metrizable space $X$ such that $X^n$ is Baire for every positive
integer $n$, whereas $(\CL(X),\tau_V)$ is not Baire.
\end{theorem}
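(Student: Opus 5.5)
Theorem~\ref{thm:seed} is the Cao--Tomita example \cite{CaoTomita2007}, so the plan is to cite it rather than reprove it, recalling its outline only enough to confirm metrizability. Their theorem gives a quasi-regular (indeed metrizable) space $X$ with every finite power Baire and non-Baire Vietoris hyperspace. The proof consists of three checks.

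\emph{Step 1: the space.} The space is a subspace of a product of discrete spaces, or of a Baire space $\kappa^{\omega}$ of large weight $\kappa$. It is built so that it meets every ``Baire-relevant'' configuration of finitely many coordinates, but not countably many configurations simultaneously. Such a subspace of a metrizable product is metrizable, so we must check that their $X$ is of this type, with topology induced from $\kappa^{\omega}$.

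\emph{Step 2: finite powers.} Fix $n$ and a sequence of dense open sets $G_k\subseteq X^n$. Play the Banach--Mazur game, or build a decreasing sequence of basic cylinders. The construction of $X$, typically by a transfinite enumeration of countable sequences of open sets, guarantees that the limit point of the cylinders in $\kappa^{\omega\cdot n}$ has all $n$ coordinates in $X$. Hence $\bigcap_k G_k$ meets every basic open set.

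\emph{Step 3: the hyperspace.} This is the hardest step. Here one exhibits countably many dense open subsets of $(\CL(X),\tau_V)$ whose intersection is empty on some nonempty open set. The natural candidates are
$$\mathcal G_k=\{A\in\CL(X): A \text{ has at least } k \text{ points ``spread'' across distinct blocks}\},$$
so that any $A$ in the intersection would encode a countable configuration that $X$ was built to avoid. Density and openness in $\tau_V$ follow from the hit sets $U^-$ adding finitely many points. Emptiness of the intersection uses closedness of $A$ together with the diagonal avoidance built into $X$.

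In the paper itself I expect this to be a one-line citation to \cite{CaoTomita2007}. The main work would be only in verifying Step 1, namely that their space is metrizable and not merely quasi-regular. If their example were stated only for quasi-regular spaces, the fallback is to redo their construction inside $\kappa^{\omega}$, which preserves the combinatorics.
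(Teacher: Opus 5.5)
Your proposal matches the paper, which establishes Theorem~\ref{thm:seed} purely by citing \cite[Example~3.3]{CaoTomita2007}; your Step~1 worry is moot, because that space is by definition a subspace of $\omega^\omega\times\kappa^\omega$ (discrete factors, $\kappa=\mathfrak c^+$), as recalled in the proof of Lemma~\ref{lem:cao-tomita-nonseparable}, and is therefore metrizable. Your Steps~2--3 are guesses that do not reflect the actual construction, which is built from pairwise disjoint stationary subsets of $\kappa$ rather than a transfinite enumeration of open sets, but the citation does not depend on them.
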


This is the space constructed in Example~3.3 of \cite{CaoTomita2007};
we use that specific realization below.

\section{A Vietoris-to-ball construction}\label{sec:marker}

The next lemma shows that, on a suitable open subspace, every upper Vietoris condition can be represented by a ball-topology miss condition.

\begin{lemma}\label{lem:marker}
Let $(X,\rho)$ be a nonempty metric space.  There exist a metric space $(Y,d)$
containing $X$ as a clopen subspace such that, under the natural identification
$\CL(X)\subseteq\CL(Y)$, the subspace $\CL(X)$ is $b_d$-open and
$$
(\CL(X),b_d|_{\CL(X)})\cong(\CL(X),\V).
$$
Moreover, $Y$ is topologically the disjoint sum of $X$ and a discrete space.
\end{lemma}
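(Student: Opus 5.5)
The plan is to enlarge $X$ by one auxiliary point for each nonempty closed subset of $X$ and one further point $q$, so that every upper Vietoris set becomes a miss set of a closed ball. First replace $\rho$ by $\min(\rho,1)$. This is a compatible metric, so it changes neither $\CL(X)$ nor $\V$. Then let
$$Y:=X\sqcup\{p_F:F\in\CL(X)\}\sqcup\{q\},$$
with the auxiliary points pairwise distinct. Define $d$ by the following rules:
\begin{itemize}
\item $d=\rho$ on $X$;
\item $d(p_F,x)=1+\min(1,\rho(x,F))$ for $x\in X$;
\item $d(p_F,p_G)=2$ for $F\neq G$;
\item $d(q,p_F)=1$ and $d(q,x)=2$ for $x\in X$.
\end{itemize}
The intended features are that $\overline B_d(p_F,1)\cap X=\{x:\rho(x,F)=0\}=F$, because $F$ is closed, and that $\overline B_d(q,1)=Y\setminus X$.

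\emph{Step 1: $d$ is a metric.} This is the step I expect to need the most care, although it reduces to finitely many cases. Every distance not inside $X$ lies in $[1,3]$, and $\rho\le 1$. Consider a triangle with two vertices $x,y\in X$ and third vertex $p_F$. The inequality $d(x,y)\le d(x,p_F)+d(p_F,y)$ holds because the left side is at most $1$ and the right side at least $2$. The inequality $d(p_F,x)\le d(p_F,y)+\rho(y,x)$ follows from the $1$-Lipschitz property of $x\mapsto\min(1,\rho(x,F))$. Triangles containing two auxiliary points hold because $2\le 1+1$ and $d(p_F,x)\le 2\le d(p_F,p_G)+d(p_G,x)$. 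Triangles containing $q$ follow similarly: $d(q,x)=2\le 1+d(p_F,x)$, $d(p_F,x)\le 2<d(p_F,q)+d(q,x)=3$, and $d(p_F,p_G)=2=1+1$. Since $d(Y\setminus X,X)\ge1$ and distinct auxiliary points are at distance at least $1$, $X$ is clopen in $Y$. Hence $Y$ is topologically the disjoint sum of $X$ and the discrete space $Y\setminus X$.

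\emph{Step 2: $\CL(X)$ is $b_d$-open.} A closed set $A\in\CL(Y)$ lies in $\CL(X)$ exactly when $A\cap\overline B_d(q,1)=\varnothing$. So $\CL(X)$ is a basic miss set of $b_d$.

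\emph{Step 3: the two topologies agree on $\CL(X)$.} Traced on $\CL(X)$, a hit set $U^-_Y$ is $(U\cap X)^-_X$, so the hit parts coincide. For a closed ball $K$ in $Y$, the trace of its miss set is $(X\setminus(K\cap X))^+_X$. This is $\V$-open because $K\cap X$ is always closed in $X$. Indeed, the trace is a closed $\rho$-ball when the centre lies in $X$. It is $\varnothing$ or $\{x:\min(1,\rho(x,F))\le r-1\}$ when the centre is $p_F$. It is $\varnothing$ or $X$ when the centre is $q$. Finite unions are handled by intersecting the corresponding upper sets. Hence $b_d|_{\CL(X)}\subseteq\V$.

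Conversely, let $U\subsetneq X$ be open and put $F=X\setminus U\in\CL(X)$. Then $U^+_X$ is exactly the trace of the miss set of $\overline B_d(p_F,1)$, since that ball meets $X$ in $F$. The remaining upper set $X^+_X=\CL(X)$ is the whole subspace. Thus $\V\subseteq b_d|_{\CL(X)}$, and the identity map is the required homeomorphism.
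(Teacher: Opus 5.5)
Your proposal is correct and follows essentially the same route as the paper. You use the same auxiliary points $p_F$ and $q$ and the same metric; your extra $\min(1,\cdot)$ in $d(p_F,x)$ is redundant once $\rho\le 1$. You also use the same identification $\CL(X)=(\overline B_d(q,1)^c)^+_Y$, and the same fact $\overline B_d(p_F,1)\cap X=F$ to realize each upper Vietoris set as the trace of a ball miss set.
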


\begin{proof}
Replace $\rho$ by the equivalent metric $\rho'=\min\{\rho,1\}$, so that $\rho'\leq 1$.  For each $F\in\CL(X)$ introduce a point $p_F$, and introduce one
further point $q$, choosing all points in $\{q\}\cup\{p_F:F\in\CL(X)\}$
pairwise distinct.  Put
$$
D=\{q\}\cup\{p_F:F\in\CL(X)\},
\qquad
Y=X\mathbin{\sqcup}D.
$$
Define $d$ by
$$
\begin{aligned}
d(x,y)&=\rho'(x,y) &&(x,y\in X),\\
d(p_F,x)&=1+\rho'(x,F) &&(x\in X),\\
d(p_F,p_G)&=2 &&(F\neq G),\\
d(q,x)&=2 &&(x\in X),\\
d(q,p_F)&=1 &&(F\in\CL(X)).
\end{aligned}
$$
Together with symmetry and $d(u,u)=0$, these formulas define a metric.  For a
triangle involving $p_F,x,y$, use
$$
|\rho'(x,F)-\rho'(y,F)|\leq\rho'(x,y).
$$
For distinct auxiliary points $p_F,p_G$ and $x\in X$,
$$
d(p_F,p_G)=2\leq(1+\rho'(x,F))+(1+\rho'(x,G)).
$$
The only nontrivial inequality involving $q,p_F,x$ is
$$
d(q,x)=2\leq d(q,p_F)+d(p_F,x)=2+\rho'(x,F),
$$
while $d(p_F,x)\leq2<3=d(p_F,q)+d(q,x)$.  For distinct auxiliary points $p_F,p_G$,
$d(p_F,p_G)=2=d(p_F,q)+d(q,p_G)$.  These inequalities exhaust all triangle
types, so $d$ is indeed a metric.

The distance between $X$ and $D$ is at least $1$, and every point of $D$ is
isolated.  Hence $X$ is clopen and $Y$ is topologically
$X\mathbin{\sqcup}D$.

The construction gives
$$
\overline B_d(q,1)=D
$$
and, for every $F\in\CL(X)$,
\begin{equation}\label{eq:markerball}
\overline B_d(p_F,1)=\{p_F,q\}\cup F.
\end{equation}
Indeed, $d(p_F,x)=1$ precisely when $\rho'(x,F)=0$, which is equivalent to
$x\in F$ because $F$ is closed.

Since $X$ is clopen in $Y$, every member of $\CL(X)$ is also closed in $Y$,
so we regard $\CL(X)$ as the subspace
$$
\{A\in\CL(Y):A\cap(Y\setminus X)=\varnothing\}.
$$
Moreover,
$$
\CL(X)=(\overline B_d(q,1)^c)^+_Y,
$$
so $\CL(X)$ is open in the ball topology on $\CL(Y)$.

Using the ambient-space notation introduced above, if $O$ is open in $Y$, then
$$
O^-_Y\cap\CL(X)=(O\cap X)^-_X.
$$
If $B$ is a finite union of closed $d$-balls, then $B\cap X$ is closed in $X$,
and hence
$$
(B^c)^+_Y\cap\CL(X)
=\{A\in\CL(X):A\cap(B\cap X)=\varnothing\}
=(X\setminus(B\cap X))^+_X
$$
is Vietoris-open.  Hence $b_d|_{\CL(X)}\subseteq\V$.

Conversely, every Vietoris hit set is the trace of the corresponding hit set
in $\CL(Y)$.  Let $U\subseteq X$ be open and put $F=X\setminus U$.  If
$F\neq\varnothing$, identity \eqref{eq:markerball} gives
$$
U^+_X=\CL(X)\cap(\overline B_d(p_F,1)^c)^+_Y.
$$
The case $F=\varnothing$ gives all of $\CL(X)$.  Thus
$\V\subseteq b_d|_{\CL(X)}$, and equality follows.

If only proper closed balls are permitted, the realization still works.
Indeed, $\overline B_d(q,1)=D$ is proper because $X\neq\varnothing$.  The two
trivial upper Vietoris sets, obtained from $U=\varnothing$ and $U=X$, are
respectively $\varnothing$ and $\CL(X)$ and require no ball condition.  If
$\varnothing\subsetneq U\subsetneq X$, then $F=X\setminus U$ is nonempty and
$\overline B_d(p_F,1)$ is proper because it misses every point of the nonempty
set $U$.
\end{proof}

\section{The counterexample}\label{sec:counterexample}

\begin{theorem}\label{thm:main}
There exist a metrizable Baire space $Y$ and a compatible metric $d$ such that
$(\CL(Y),\W(d))$ is not Baire.  Moreover, $Y^n$ is Baire for every positive
integer $n$.
\end{theorem}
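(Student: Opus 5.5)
Take $X$ to be the Cao--Tomita space of Theorem~\ref{thm:seed}, with some compatible metric $\rho$, and let $(Y,d)$ be the metric space produced from $(X,\rho)$ by Lemma~\ref{lem:marker}. Then $Y$ is metrizable and $d$ is compatible. Topologically $Y$ is $X\sqcup D$ with $D$ discrete, so the plan has three parts: establish the finite powers, show the ball hyperspace fails to be Baire, and transfer this to the Wijsman hyperspace.

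\emph{Finite powers.} For each $n$ the power $Y^n$ is a finite disjoint union of clopen pieces, each homeomorphic to $X^k\times D^{n-k}$ for some $0\le k\le n$. Since $D$ is discrete, $X^k\times D^{n-k}$ is a disjoint sum of copies of $X^k$ (a single point when $k=0$). By Theorem~\ref{thm:seed} each $X^k$ is Baire, and a disjoint sum of open Baire subspaces is Baire. Hence $Y^n$ is Baire, and the case $n=1$ gives that $Y$ itself is Baire.

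\emph{Non-Baireness.} By Lemma~\ref{lem:marker}, $\CL(X)$ is a $b_d$-open subset of $\CL(Y)$ whose subspace topology is homeomorphic to $(\CL(X),\V)$. By Theorem~\ref{thm:seed} this subspace is not Baire. A space with a nonempty open subspace that is not Baire is itself not Baire: a nonempty open meagre subset of the subspace is open and meagre in the whole space, because nowhere dense subsets of an open subspace stay nowhere dense in the ambient space. Therefore $(\CL(Y),b_d)$ is not Baire. Theorem~\ref{thm:zsilinszky}(1) then gives that $(\CL(Y),\W(d))$ is not Baire.

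The only delicate point is the passage from the open subspace to the whole ball hyperspace. The standard characterisation suffices: $Z$ is Baire iff every nonempty open subset of $Z$ is nonmeagre in itself. Alternatively, one can note that an open subspace of a Baire space is Baire, and argue by contraposition. Everything else is bookkeeping with disjoint sums, so I expect no real obstacle once Lemma~\ref{lem:marker} is in hand.
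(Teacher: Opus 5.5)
Your proposal is correct and follows the paper's proof essentially step for step. You use the same Cao--Tomita seed, the open copy of $(\CL(X),\V)$ inside $(\CL(Y),b_d)$ from Lemma~\ref{lem:marker}, transfer via Theorem~\ref{thm:zsilinszky}(1), and the same clopen decomposition of $Y^n$ into sums of copies of $X^k$. Your explicit justification that a non-Baire open subspace forces non-Baireness of the whole space is a detail the paper leaves implicit.
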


\begin{proof}
Let $X$ be the specific metrizable space constructed in
\cite[Example~3.3]{CaoTomita2007}, as recorded in
Theorem~\ref{thm:seed}, and choose any compatible metric $\rho$ on $X$.  Apply Lemma~\ref{lem:marker}, obtaining
$(Y,d)$.  Under the natural identification $\CL(X)\subseteq\CL(Y)$,
Lemma~\ref{lem:marker} gives
$$
(\CL(X),b_d|_{\CL(X)})\cong(\CL(X),\V),
$$
and $\CL(X)$ is $b_d$-open in $\CL(Y)$.  By hypothesis, this open subspace is not
Baire, so $(\CL(Y),b_d)$ cannot be Baire.  By
Theorem~\ref{thm:zsilinszky}(1), $(\CL(Y),\W(d))$ is not Baire.

Topologically, $Y=X\mathbin{\sqcup}D$, with $D$ discrete.  Fix $n\geq1$.  For
$S\subseteq\{1,\ldots,n\}$, let
$$
P_S=\{y\in Y^n:y_i\in X\text{ if and only if }i\in S\}.
$$
The sets $P_S$ form a finite clopen partition of $Y^n$.  Since
$D^{n-|S|}$ is discrete, $P_S$ is the topological sum of the clopen slices
$X^{|S|}\times\{\xi\}$, indexed by $\xi\in D^{n-|S|}$, and each slice is
homeomorphic to $X^{|S|}$.  Equivalently,
$$
P_S\cong X^{|S|}\times D^{n-|S|}
\cong\coprod_{\xi\in D^{n-|S|}}X^{|S|}.
$$
For $1\leq k\leq n$, the space $X^k$ is Baire by
Theorem~\ref{thm:seed}, while $X^0$ is a singleton.  An arbitrary topological
sum of Baire spaces is Baire, and therefore $Y^n$ is Baire.
\end{proof}

\begin{corollary}\label{cor:cao}
Questions~4.4 and~4.5 recorded in Cao's survey have negative answers:
\begin{enumerate}
\item Baireness of every finite power $X^n$ does not imply Baireness of the
      Wijsman hyperspace.
\item A Baire metrizable space can have a non-Baire Wijsman hyperspace for a
      compatible metric.
\end{enumerate}
\end{corollary}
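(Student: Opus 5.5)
The corollary is a direct reading of Theorem~\ref{thm:main}, and I would prove it simply by matching the two questions in Cao's survey to the two conclusions of that theorem.

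\textbf{Step 1 (fix the space).} Take the pair $(Y,d)$ produced in Theorem~\ref{thm:main}. Recall that $Y$ is the sum $X\sqcup D$ built from the Cao--Tomita space via Lemma~\ref{lem:marker}. Metrizability of $Y$ is part of that theorem's statement, and $d$ is a compatible metric on $Y$.

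\textbf{Step 2 (item (1), Question~4.4).} By the ``moreover'' clause of Theorem~\ref{thm:main}, $Y^n$ is Baire for every positive integer $n$. By the main clause of the same theorem, $(\CL(Y),\W(d))$ is not Baire. So $(Y,d)$ is a metric space all of whose finite powers are Baire, yet its Wijsman hyperspace is not Baire. This refutes the sufficiency asked for in Question~4.4.

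\textbf{Step 3 (item (2), Question~4.5).} Take $n=1$ in the same theorem: $Y$ itself is a Baire metrizable space. Its Wijsman hyperspace with respect to the compatible metric $d$ is still not Baire, which refutes Question~4.5.

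The only point needing care, and the main obstacle, is that the questions quantify over metric spaces $(X,d)$. I would therefore state explicitly that the negative answer concerns a specific compatible metric $d$, not every compatible metric.

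This precision is necessary because of Theorem~\ref{thm:zsilinszky}(2). For almost locally separable spaces, Baireness of $X$ forces Baireness of $(\CL(X),\W(e))$ for every compatible $e$. So the counterexample cannot be almost locally separable, and it cannot be a counterexample for all metrics simultaneously.

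The proof therefore only needs to note that the survey's questions ask about a given metric. With that observation, the corollary is immediate from Theorem~\ref{thm:main}.
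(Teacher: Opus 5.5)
Your Steps 1--3 are correct and match the paper, which states this corollary without a separate proof as an immediate reading of Theorem~\ref{thm:main}: every finite power of $Y$ is Baire, $Y$ itself is Baire, and $(\CL(Y),\W(d))$ is not Baire. Drop your side claim that $Y$ ``cannot be a counterexample for all metrics simultaneously'': Theorem~\ref{thm:zsilinszky}(2) only shows that such a $Y$ is not almost locally separable, so it says nothing about the other compatible metrics on $Y$ (the claim is also not needed for the corollary).
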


\begin{corollary}\label{cor:omega-power}
For the space $Y$ in Theorem~\ref{thm:main}, every finite power $Y^n$ is
Baire, but the countable power $Y^{\aleph_0}$ is not Baire.
\end{corollary}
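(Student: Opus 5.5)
The finite-power part is already established in the proof of Theorem~\ref{thm:main}, so only the non-Baireness of $Y^{\aleph_0}$ needs an argument. I would derive it by contraposition from Cao's theorem \cite{Cao2010}: if $Y^{\aleph_0}$ is Baire, then $(\CL(Y),\W(e))$ is Baire for every compatible metric $e$ on $Y$. Taking $e=d$, the metric built in Lemma~\ref{lem:marker}, would then contradict Theorem~\ref{thm:main}, which shows that $(\CL(Y),\W(d))$ is not Baire. Hence $Y^{\aleph_0}$ is not Baire.

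The first step is to check that Cao's theorem applies to $(Y,d)$ as stated. Cao's result is formulated for metric spaces with no further separation or separability hypotheses. Since $Y$ is metrizable and $d$ is compatible, nothing extra needs to be verified, and the deduction is immediate.

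As a sanity check, and as a self-contained fallback in case one prefers not to invoke \cite{Cao2010}, I would also sketch a direct route. Since $X$ is clopen in $Y$, the set $X^{\aleph_0}$ is closed in $Y^{\aleph_0}$, but it is not open in the product topology, so Baireness does not pass to it automatically. The better observation is the following. Cao--Tomita show that $X^{\aleph_0}$ Baire together with quasi-regularity of $X$ implies that $(\CL(X),\tau_V)$ is Baire. Since $(\CL(X),\tau_V)$ is not Baire by Theorem~\ref{thm:seed}, the power $X^{\aleph_0}$ is not Baire. Passing from $X^{\aleph_0}$ to $Y^{\aleph_0}$ requires some care, because $X^{\aleph_0}$ has empty interior in $Y^{\aleph_0}$. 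This is where I expect the only real difficulty. I would therefore rely on the first argument via \cite{Cao2010}, which avoids this issue entirely and which the introduction explicitly flags as the intended route.
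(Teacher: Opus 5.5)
Your proof is correct and is essentially the paper's own argument: the finite powers come from Theorem~\ref{thm:main}, and non-Baireness of $Y^{\aleph_0}$ follows from the contrapositive of Cao's theorem \cite{Cao2010} applied with $e=d$. The fallback sketch is not needed, and you rightly do not rely on it.
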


\begin{proof}
By \cite{Cao2010}, if $Z^{\aleph_0}$ is Baire, then
$(\CL(Z),\tau_W(e))$ is Baire for every compatible metric $e$ on $Z$.
Applying the contrapositive with $Z=Y$ and with the compatible metric $e=d$
from Theorem~\ref{thm:main} shows that $Y^{\aleph_0}$ is not Baire.
\end{proof}

\section{The locally \texorpdfstring{$\sigma$-$d$}{sigma-d}-totally bounded core}\label{sec:core}

We next establish a complementary metric obstruction expressed directly in
the original metric.

\begin{definition}
A subset $M\subseteq X$ is \emph{$\sigma$-$d$-totally bounded}, or
\emph{$\sigma$-$d$-precompact}, if
$$
M=\bigcup_{n<\aleph_0}M_n
$$
for $d$-totally bounded sets $M_n$.  Define
$$
S_d=\bigcup\{U\subseteq X:U\text{ is open and }\sigma\text{-}d
\text{-totally bounded}\}.
$$
\end{definition}

The set $S_d$ is open and is precisely the set of points having a
$\sigma$-$d$-totally bounded neighborhood.  Moreover, every
$\sigma$-$d$-totally bounded metric subspace is separable: a totally bounded
metric space is separable, and a countable union of separable subspaces is
separable.  Thus every point of $S_d$ has an open separable neighborhood, and
$S_d$ is contained in the set of points of local separability.

\begin{lemma}\label{lem:decomposition}
Let $C$ be a closed, meagre, $\sigma$-$d$-totally bounded subset of $X$.
Then there are increasing closed nowhere dense $d$-totally bounded sets $C_n$
such that
$$
C=\bigcup_{n<\aleph_0}C_n.
$$
\end{lemma}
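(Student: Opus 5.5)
We combine the two decompositions of $C$ and take closures.

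Since $C$ is $\sigma$-$d$-totally bounded, write $C=\bigcup_n T_n$ with each $T_n$ $d$-totally bounded. Since $C$ is meagre in $X$, write $C\subseteq\bigcup_n N_n$ with each $N_n$ nowhere dense in $X$. Replacing $N_n$ by its closure, we may assume each $N_n$ is closed, and intersecting with $C$ we may assume $N_n\subseteq C$, so $C=\bigcup_n N_n$.

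Put
$$
C_n=\bigcup_{i,j\le n}\overline{T_i\cap N_j}.
$$
This is a finite union, so the sets $C_n$ are increasing and closed, and we verify the required properties as follows.
\begin{enumerate}
\item Each $C_n$ lies in $C$: $\overline{T_i\cap N_j}\subseteq\overline{N_j}=N_j\subseteq C$, using that $C$ is closed.
\item Each $C_n$ is nowhere dense, being a finite union of closed subsets of the nowhere dense sets $N_j$.
\item Each $C_n$ is $d$-totally bounded. The closure of a totally bounded set is totally bounded, since an $\varepsilon$-net for $T_i\cap N_j$ is a $2\varepsilon$-net for its closure. A finite union of totally bounded sets is totally bounded.
\item The $C_n$ cover $C$. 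Any $x\in C$ lies in some $T_i$ and some $N_j$, hence in $C_{\max(i,j)}$.
\end{enumerate}

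There is no real obstacle. The only care needed is in item (1): we should take closures only of subsets of the $N_j$, and rely on $C$ being closed, so that the closures stay inside $C$ while remaining nowhere dense and totally bounded.
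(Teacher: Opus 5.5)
Your proof is correct and takes the same approach as the paper: intersect the closed nowhere dense cover with the totally bounded cover, take closures (which stay inside the closed nowhere dense pieces of $C$), and form finite unions along an enumeration of pairs, your $\max(i,j)\le n$ indexing being one explicit such enumeration. The only extra step in the paper is ordering the enumeration so that every $C_n$ is nonempty when $C\neq\varnothing$. The lemma does not require this, but Proposition~\ref{prop:sigma-obstruction} uses it, and in your version it follows by discarding the initial empty $C_n$ (they are increasing) and reindexing.
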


\begin{proof}
Choose closed nowhere dense sets $N_i$ covering $C$ and totally bounded sets
$T_j$ covering $C$.  For every pair $(i,j)$, the set
$$
D_{i,j}=\overline{C\cap N_i\cap T_j}^{\,X}
$$
is closed, nowhere dense, and totally bounded: it is contained in the closed
nowhere dense set $C\cap N_i$, and the closure of a totally bounded set remains
totally bounded.  The sets $D_{i,j}$ cover $C$.  Enumerate the pairs $(i,j)\in\omega^2$,
putting a pair with $D_{i,j}\neq\varnothing$ first when $C\neq\varnothing$,
and take successive finite unions of the corresponding $D_{i,j}$.  This gives
the required sequence $(C_n)_{n<\aleph_0}$.
\end{proof}

\begin{proposition}\label{prop:sigma-obstruction}
Assume that $(\CL(X),\W(d))$ is Baire.  If $C\subseteq X$ is closed, meagre
in $X$, and $\sigma$-$d$-totally bounded, then
$$
\operatorname{int} C=\varnothing.
$$
\end{proposition}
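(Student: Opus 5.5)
The plan is to argue by contradiction and to work in the ball topology $b_d$ rather than in $\W(d)$ directly. Suppose $U:=\operatorname{int}C\neq\varnothing$. By Lemma~\ref{lem:decomposition}, $C=\bigcup_{n}C_n$, where the $C_n$ are closed, nowhere dense and $d$-totally bounded. The hit set $U^-$ is nonempty and open in both $\W(d)$ and $b_d$. By Theorem~\ref{thm:zsilinszky}(1) it suffices to show that $(\CL(X),b_d)$ is not Baire. For this I will exhibit countably many $b_d$-closed, $b_d$-nowhere dense sets covering $U^-$.

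\textbf{Definition of the covering sets.} For each $n$ and $m\geq1$, total boundedness gives a finite $1/m$-net $F_{n,m}\subseteq C_n$; this is possible when $C_n\neq\varnothing$, and empty $C_n$ are discarded. Put
$$
E_n=\bigcap_{m\geq1}\Bigl\{A\in\CL(X):\min_{x\in F_{n,m}}d(x,A)\leq 1/m\Bigr\}.
$$
Each set in the intersection is $\W(d)$-closed, being defined by a finite minimum of the continuous maps $A\mapsto d(x,A)$. Hence $E_n$ is $\W(d)$-closed, and therefore $b_d$-closed because $\W(d)\subseteq b_d$.

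\textbf{Covering.} If $A\cap C_n\neq\varnothing$, pick $a$ in this intersection. For each $m$ some $x\in F_{n,m}$ satisfies $d(x,a)\leq 1/m$, so $A\in E_n$. Every $A\in U^-$ meets $U\subseteq C=\bigcup_n C_n$, and therefore $U^-\subseteq\bigcup_n E_n$.

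\textbf{Nowhere density (the main step).} Take a nonempty basic $b_d$-open set
$$
W=V_1^-\cap\cdots\cap V_k^-\cap(K^c)^+,\qquad K\in\mathcal K_d,
$$
and a member $A\in W$. If $k=0$, set $V_1=X$; this is harmless since $A\neq\varnothing$ and $A\cap K=\varnothing$. For each $i$ pick $a_i\in A\cap V_i$. Since $K$ is closed and $a_i\notin K$, some open neighbourhood of $a_i$ inside $V_i$ misses $K$. Because $C_n$ is closed and nowhere dense, this neighbourhood contains a point $a_i'\notin C_n$.

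Let $A'=\{a_1',\dots,a_k'\}$. Then $A'\in W$. Moreover $\delta:=d(A',C_n)>0$, since $A'$ is a finite set of points outside the closed set $C_n$. For $m$ with $1/m<\delta$, every $x\in F_{n,m}\subseteq C_n$ satisfies $d(x,A')\geq\delta>1/m$, so $A'\notin E_n$. Thus $W\not\subseteq E_n$; since $E_n$ is closed, it is nowhere dense in $b_d$.

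\textbf{Conclusion.} The nonempty $b_d$-open set $U^-$ is contained in the meagre set $\bigcup_n E_n$, so $(\CL(X),b_d)$ is not Baire. By Theorem~\ref{thm:zsilinszky}(1), $(\CL(X),\W(d))$ is not Baire either, contradicting the hypothesis; hence $\operatorname{int}C=\varnothing$.

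The delicate point is the choice of $E_n$. The naive hit set $\{A:A\cap C_n\neq\varnothing\}$ need not be closed, because $C_n$ is only totally bounded and not compact. The finite nets repair this, and they still let a finite set pushed off $C_n$ escape $E_n$.
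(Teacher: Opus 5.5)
Your proof is correct and is the paper's argument written in complementary form. Your $E_n$ is exactly $\CL(X)\setminus G_n$ for the paper's dense ball-open sets $G_n=\bigcup_k(K_{n,k}^c)^+$: whatever nets are used, both are determined by whether $\inf\{d(a,c):a\in A,\ c\in C_n\}$ is $0$. Your nowhere-density step is the paper's density step.

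Your formulation has one small bonus: $E_n$ is visibly $\W(d)$-closed. You use $\W(d)\subseteq b_d$ without proof; it follows from $\{A:d(x,A)<r\}=B_d(x,r)^-$ and $\{A:d(x,A)>r\}=\bigcup_{\varepsilon>0}(\overline B_d(x,r+\varepsilon)^c)^+$. Given that inclusion, each $E_n$ is already $\W(d)$-nowhere dense and $U^-$ is $\W(d)$-open, so the final appeal to Theorem~\ref{thm:zsilinszky}(1) is unnecessary.
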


\begin{proof}
Suppose, towards a contradiction, that $\operatorname{int} C\neq\varnothing$.
Apply Lemma~\ref{lem:decomposition} and write
$$
C=\bigcup_{n<\aleph_0}C_n,
$$
where the $C_n$ are nonempty, increasing, closed, nowhere dense, and totally
bounded.
For every $n$ and $k\geq1$, choose a finite set $F_{n,k}\subseteq C_n$ such that
$$
C_n\subseteq\bigcup_{f\in F_{n,k}}B_d\left(f,\frac1{2k}\right),
$$
and put
$$
K_{n,k}=\bigcup_{f\in F_{n,k}}\overline B_d\left(f,\frac1k\right),
\qquad
G_n=\bigcup_{k\geq1}(K_{n,k}^c)^+.
$$
Each $G_n$ is ball-open.  We claim that it is dense.  Let
$$
\mathcal U=(K^c)^+\cap\bigcap_{i=1}^m U_i^-,\qquad m\geq0,
$$
be a nonempty basic ball-open set, where each $U_i\subseteq X$ is open and
$K$ is a finite union of closed $d$-balls.  Since every $A\in(K^c)^+$ is contained
in $X\setminus K$, replacing each $U_i$ by $U_i\setminus K$ does not change
$\mathcal U$.  We may therefore assume that $U_i\subseteq X\setminus K$ for
every $i$.  If $m=0$, add the redundant hit condition $U_1=X\setminus K$, so
that $m\geq1$.  Since $C_n$ is nowhere dense, choose
$$
v_i\in U_i\setminus C_n
$$
for each $i$.  Because $C_n$ is closed and there are only finitely many $v_i$,
$$
\delta=\min_i d(v_i,C_n)>0.
$$
Choose $k$ with $\frac 1k<\delta$.  Then the finite closed set
$A=\{v_1,\ldots,v_m\}$ avoids $K_{n,k}$ and belongs to
$\mathcal U\cap G_n$.  Thus $G_n$ is dense in the ball topology.

On the other hand,
$$
(\operatorname{int} C)^-\cap\bigcap_{n<\aleph_0}G_n=\varnothing.
$$
Indeed, if $A$ meets $\operatorname{int} C$, choose
$a\in A\cap\operatorname{int} C$ and then
$n$ with $a\in C_n$.
For every $k$, the covering property gives $a\in K_{n,k}$, so
$A\notin(K_{n,k}^c)^+$ and hence $A\notin G_n$.  We have produced dense open
sets whose intersection misses the nonempty $b_d$-open set
$(\operatorname{int}C)^-$ and is therefore not dense.  Thus
$(\CL(X),b_d)$ is not Baire,
and Theorem~\ref{thm:zsilinszky}(1) implies that $(\CL(X),\W(d))$ is not
Baire.  We obtain a contradiction, and the claim follows.
\end{proof}

\begin{remark}
The preceding proof is a fixed-metric localization of the obstruction used in
the proof of Zsilinszky's Theorem~2.3 \cite{Zsilinszky2007}.  There the metric
is replaced on a suitable separable closed subspace by a totally bounded
compatible metric.  Here the $\sigma$-$d$-total boundedness already present in
the original metric allows the argument to be carried out without changing
$d$.
\end{remark}

\begin{theorem}\label{thm:kernel}
If $(\CL(X),\W(d))$ is Baire, then $S_d$ is Baire.
\end{theorem}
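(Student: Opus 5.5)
We argue by contraposition and reduce to Proposition~\ref{prop:sigma-obstruction}. Suppose $S_d$ is not Baire. Because $S_d$ is open in $X$, this yields a nonempty open set $W\subseteq S_d$ that is meagre in $S_d$. Open subsets of $X$ that are meagre in the open subspace $S_d$ are also meagre in $X$, since a set nowhere dense in an open subspace is nowhere dense in $X$. Hence $W$ is a nonempty open meagre subset of $X$. The aim is to shrink $W$ to a nonempty open set $V$ whose closure $C=\overline V$ is meagre and $\sigma$-$d$-totally bounded. Then $\operatorname{int}C\supseteq V\neq\varnothing$, which contradicts Proposition~\ref{prop:sigma-obstruction}.

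\emph{Choice of $V$.} Pick a point $x\in W$. Since $x\in S_d$, it has an open $\sigma$-$d$-totally bounded neighborhood $U$. Choose $r>0$ with $\overline B_d(x,r)\subseteq W\cap U$, and set $V=B_d(x,r/2)$. The closure $C=\overline V$ is contained in $\overline B_d(x,r/2)\subseteq W\cap U$.

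\emph{$C$ is $\sigma$-$d$-totally bounded.} Write $U=\bigcup_n M_n$ with each $M_n$ totally bounded. Then $C=\bigcup_n (C\cap M_n)$, and a subset of a totally bounded set is totally bounded.

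\emph{$C$ is meagre in $X$.} The set $C$ is a subset of the meagre set $W$, and subsets of meagre sets are meagre. $C$ is closed by construction. Thus all hypotheses of Proposition~\ref{prop:sigma-obstruction} hold while $\operatorname{int}C\ne\varnothing$, so $(\CL(X),\W(d))$ is not Baire, as required.

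The only point needing care is the first reduction, the transfer of meagreness from $S_d$ to $X$. It suffices that if $N$ is nowhere dense in the open subspace $S_d$, then $N$ is nowhere dense in $X$. Suppose $\operatorname{int}_X\overline N^{X}\neq\varnothing$. This interior is contained in $\overline{S_d}$, so it meets $S_d$, and its intersection with $S_d$ is a nonempty open subset of $S_d$ lying in $\overline N^{X}\cap S_d=\overline N^{S_d}$. That contradicts nowhere density of $N$ in $S_d$. The remaining steps are routine.
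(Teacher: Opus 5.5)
Your proof is correct and follows essentially the same route as the paper: take a point of a nonempty open meagre subset of $S_d$, shrink to a nonempty open set whose closure lies inside both that set and a $\sigma$-$d$-totally bounded neighbourhood, and apply Proposition~\ref{prop:sigma-obstruction}. Your explicit check that a set nowhere dense in the open subspace $S_d$ is nowhere dense in $X$ supplies the one step the paper only asserts.
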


\begin{proof}
Suppose that $S_d$ is not Baire.  There is a nonempty set
$O\subseteq S_d$ which is open in $S_d$ and meagre in $S_d$.  Since $S_d$ is
open in $X$, the set $O$ is also open in $X$.  Choose
$x\in O$ and an open $\sigma$-$d$-totally bounded neighborhood $U$ of $x$.
By metric regularity, there is a nonempty open set $V$ such that
$$
C:=\overline V^{\,X}\subseteq O\cap U.
$$
The set $C$ is closed, has nonempty interior, and is
$\sigma$-$d$-totally bounded, since that property is hereditary and $C\subseteq
U$.  It is also meagre in $X$: since $S_d$ is open in $X$, every subset that
is meagre in $S_d$ is meagre in $X$.  Hence $C\subseteq O$ is meagre in $X$.
This contradicts Proposition~\ref{prop:sigma-obstruction}.
\end{proof}

\begin{corollary}\label{cor:dense-kernel}
If $S_d$ is dense in $X$, then the following are equivalent:
\begin{enumerate}
\item $(\CL(X),\W(d))$ is Baire;
\item $X$ is Baire;
\item $S_d$ is Baire.
\end{enumerate}
\end{corollary}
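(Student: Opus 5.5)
I will prove the equivalence $(2)\Leftrightarrow(3)$ by a general topological fact, obtain $(1)\Rightarrow(3)$ from Theorem~\ref{thm:kernel}, and derive $(2)\Rightarrow(1)$ from Zsilinszky's Theorem~\ref{thm:zsilinszky}(2).

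\emph{$(2)\Leftrightarrow(3)$.} Recall that a dense open subspace $W$ of a space $X$ is Baire if and only if $X$ is Baire. For the forward direction, an open subspace of a Baire space is Baire. Conversely, suppose $W$ is Baire and $(G_n)$ are dense open subsets of $X$. Then each $G_n\cap W$ is dense and open in $W$, because $W$ is open. Hence $\bigcap_n G_n\cap W$ is dense in $W$, and therefore dense in $X$, since $W$ is dense in $X$. Applying this with $W=S_d$, which is open by definition and dense by hypothesis, gives $(2)\Leftrightarrow(3)$.

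\emph{$(1)\Rightarrow(3)$.} This is exactly Theorem~\ref{thm:kernel}.

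\emph{$(2)\Rightarrow(1)$.} This is the only step needing a real tool. As noted after the definition of $S_d$, every point of $S_d$ has an open separable neighborhood. Thus the set of points of local separability contains $S_d$ and is dense, so $X$ is almost locally separable. Theorem~\ref{thm:zsilinszky}(2) then says that, since $X$ is Baire, $(\CL(X),\tau_W(e))$ is Baire for every compatible metric $e$, and in particular for $e=d$.

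The main potential obstacle is making sure Theorem~\ref{thm:zsilinszky}(2) applies as stated. Its hypothesis is almost local separability of $X$, which is a topological property and is secured by the density of $S_d$. Its conclusion is quantified over all compatible metrics, so specializing to $d$ is legitimate. With this check the chain $(1)\Rightarrow(3)\Rightarrow(2)\Rightarrow(1)$ closes, and together with $(2)\Rightarrow(3)$ it gives the full equivalence.
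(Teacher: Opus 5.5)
Your proof is correct and follows essentially the same route as the paper. You get (1)$\Rightarrow$(3) from Theorem~\ref{thm:kernel}, (2)$\Leftrightarrow$(3) because $S_d$ is a dense open subspace, and (2)$\Rightarrow$(1) from Theorem~\ref{thm:zsilinszky}(2), with almost local separability supplied by the density of $S_d$; you simply spell out the dense-open-subspace Baire argument that the paper states without proof.
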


\begin{proof}
Theorem~\ref{thm:kernel} gives (1)$\Rightarrow$(3).  The open dense Baire
subspace $S_d$ forces $X$ to be Baire, so (3)$\Rightarrow$(2).  Finally, if
$X$ is Baire, the open subspace $S_d$ is Baire.  Since $S_d$ is dense and every
point of $S_d$ has a separable neighborhood, $X$ is almost locally separable.
Theorem~\ref{thm:zsilinszky}(2) gives (2)$\Rightarrow$(1).
\end{proof}

We now return to the counterexample of Section~\ref{sec:counterexample}.
Corollary~\ref{cor:marker-core} identifies its locally
$\sigma$-$d$-totally bounded core exactly.  The only additional ingredient is the following
local property of the specific Cao--Tomita space.

\begin{lemma}\label{lem:cao-tomita-nonseparable}
Let $X$ be the metric space constructed in
\cite[Example~3.3]{CaoTomita2007}.  Every nonempty open subset of $X$
contains $\mathfrak c^+$ pairwise disjoint nonempty open subsets.  In
particular, $X$ has no nonempty separable open subset.
\end{lemma}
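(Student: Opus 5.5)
The plan is to unwind the explicit description of the space in \cite[Example~3.3]{CaoTomita2007} and reduce the first assertion to a density property of $X$ inside a tree-like ambient space. The second assertion then follows formally. My recollection, which I have not checked against the source, is that the Cao--Tomita space is realized as a subspace $X\subseteq\kappa^{\omega}$ of the Baire space of weight $\kappa=\mathfrak c^+$. Here $\kappa$ carries the discrete topology, and $X$ is selected by a transfinite construction that diagonalizes against $\mathfrak c$-many candidate witnesses. The metric is the usual first-difference metric, and the basic open sets are the cylinders
$$
[s]\cap X=\{x\in X:s\subseteq x\},\qquad s\in\kappa^{<\omega}.
$$
The lemma is a statement about the topology only, so the particular compatible metric $\rho$ plays no role.

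\textbf{Step 1: density of $X$.} First I would check from the construction that $X$ is dense in $\kappa^{\omega}$, that is, $[s]\cap X\neq\varnothing$ for every finite sequence $s$. Typically this holds because the construction either starts from a dense set or explicitly adds a point extending each $s$ at every stage. This is the step I expect to be the main obstacle. It requires checking the actual definition of the space rather than any result stated earlier in the paper. If $X$ is only dense in some subtree $T\subseteq\kappa^{<\omega}$, I would instead verify that every node of $T$ has $\kappa$ many immediate successors in $T$. That weaker fact is all the remaining argument uses.

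\textbf{Step 2: $\mathfrak c^+$ disjoint open sets.} Let $W\subseteq X$ be nonempty and open. Then $W$ contains some nonempty cylinder $[s]\cap X$. By Step 1, each one-step extension $[s^\frown\alpha]\cap X$ with $\alpha<\kappa$ is nonempty and open. These $\kappa=\mathfrak c^+$ cylinders are pairwise disjoint, since their members differ at coordinate $|s|$, and all of them lie inside $W$. This proves the first claim.

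\textbf{Step 3: no separable open set.} Suppose $W$ is a nonempty open subset of $X$ and is separable, with countable dense subset $Q$. Each of the $\mathfrak c^+$ pairwise disjoint nonempty open subsets of $W$ from Step 2 must contain a point of $Q$. Disjointness then gives an injection from an uncountable family into the countable set $Q$, which is impossible. Hence $X$ has no nonempty separable open subset.
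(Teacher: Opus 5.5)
Your Steps 2 and 3 are correct and match the final part of the paper's argument. Step 1, however, is the only step with real content, and it is not proved. You flag it as the main obstacle yourself, and the picture of the space you base it on is not the Cao--Tomita space. With $\kappa=\mathfrak c^+$, the space of \cite[Example~3.3]{CaoTomita2007} is defined directly as
$$
X=\{(y,f)\in\omega^\omega\times\kappa^\omega: f^*\in C_y\},\qquad f^*=\sup_{n<\omega}(f(n)+1).
$$
Here $C_y=\bigcup_{z\in I_y}A_z$, and $\{A_z:z\in\omega^\omega\}$ is a family of pairwise disjoint stationary subsets of $C^\kappa_\omega=\{\alpha<\kappa:\operatorname{cf}(\alpha)=\omega\}$. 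There is no transfinite construction adding points above every node, so your heuristic does not apply. Density is also genuinely not automatic. Suppose all the $A_z$ were contained in some $\beta<\kappa$. Then every $f$ extending $\langle\beta\rangle$ would satisfy $f^*>\beta$, so $f^*\notin C_y$, and $X$ would miss the nonempty open set $\omega^\omega\times[\langle\beta\rangle]$ entirely.

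What is missing is the argument that uses the specific properties of the $A_z$.
\begin{itemize}
\item[(a)] Given a basic rectangle $[s]\times[t]$, choose $y\in[s]$ and $z\in I_y$.
\item[(b)] Since $A_z$ is stationary in the regular cardinal $\kappa$, it is unbounded, so some $\alpha\in A_z$ exceeds every value occurring in $t$.
\item[(c)] Since $\operatorname{cf}(\alpha)=\omega$, appending to $t$ an $\omega$-sequence cofinal in $\alpha$ produces $f\in[t]$ with $f^*=\alpha\in C_y$. Hence $(y,f)\in X\cap([s]\times[t])$.
\end{itemize}
Once $X$ is known to be dense in $\omega^\omega\times\kappa^\omega$, your Step 2 goes through with the splitting done in the second coordinate. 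For a nonempty open $W$, choose $s,t$ with $\varnothing\neq X\cap([s]\times[t])\subseteq W$. The sets $X\cap([s]\times[t^\frown\langle\xi\rangle])$, $\xi<\kappa$, are nonempty, open, pairwise disjoint, and contained in $W$. Alternatively, you can identify $\omega^\omega\times\kappa^\omega$ with $(\omega\times\kappa)^\omega\cong\kappa^\omega$, which recovers your tree picture and lets you split on the next coordinate as you proposed.
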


\begin{proof}
Put $\kappa=\mathfrak c^+$, where $\mathfrak c=2^{\aleph_0}$, and, for a
cardinal $\lambda$, put $J_\lambda=\lambda^\omega$, endowed with the
product of the discrete topology on $\lambda$.  If
$s\in\lambda^{<\omega}$, write
$$
[s]=\{g\in J_\lambda:s\subseteq g\}.
$$
Recall the construction from \cite[Example~3.3]{CaoTomita2007}.  There is a
family $\{A_z:z\in J_\omega\}$ of pairwise disjoint stationary subsets of
$$
C^\kappa_\omega=\{\alpha<\kappa:\operatorname{cf}(\alpha)=\omega\}.
$$
For $y\in J_\omega$, define
$$
I_y=\left\{z\in J_\omega:
\max\{n<\omega:y(n)=z(n)\}
\leq
\max\{n<\omega:z(n)\geq n\}<\omega
\right\}
$$
and $C_y=\bigcup_{z\in I_y}A_z$.  For $f\in J_\kappa$, put
$$
f^*=\sup_{n<\omega}(f(n)+1).
$$
Then
$$
X=\{(y,f)\in J_\omega\times J_\kappa:f^*\in C_y\}.
$$

We next show that $X$ meets every basic open rectangle in
$J_\omega\times J_\kappa$.  Let $s\in\omega^{<\omega}$ and
$t\in\kappa^{<\omega}$.  Choose $y\in[s]$ and then $z\in I_y$.
Since $A_z$ is stationary in the regular uncountable cardinal $\kappa$, it
is unbounded.  Choose $\alpha\in A_z$ larger than every value occurring in
$t$.  As $\operatorname{cf}(\alpha)=\omega$, there is a sequence cofinal in
$\alpha$ all of whose terms are larger than every value occurring in $t$.
Appending such a sequence to $t$ gives $f\in[t]$ with $f^*=\alpha$.
Since $\alpha\in A_z\subseteq C_y$, we have
$$
(y,f)\in X\cap([s]\times[t]).
$$
Thus $X$ is dense in $J_\omega\times J_\kappa$.

Let $U$ be a nonempty open subset of $X$.  Choose
$s\in\omega^{<\omega}$ and $t\in\kappa^{<\omega}$ such that
$$
\varnothing\neq X\cap([s]\times[t])\subseteq U.
$$
For each $\xi<\kappa$, put
$$
U_\xi=X\cap\bigl([s]\times[t^\frown\langle\xi\rangle]\bigr).
$$
The sets $U_\xi$ are open in $X$, pairwise disjoint, and contained in $U$;
they are nonempty by the density just proved.  Hence $U$ contains $\kappa$
pairwise disjoint nonempty open subsets.  A separable metric space has no
uncountable family of pairwise disjoint nonempty open subsets, so $U$ is not
separable.
\end{proof}

\begin{corollary}\label{cor:marker-core}
Let $X$ be the Cao--Tomita space from Lemma~\ref{lem:cao-tomita-nonseparable},
and let $(Y,d)$ be the auxiliary-point space constructed from $X$ in the proof of
Theorem~\ref{thm:main}, so that $Y=X\mathbin{\sqcup}D$.  For every compatible
metric $e$ on $Y$, the locally $\sigma$-$e$-totally bounded core satisfies
$$
S_e=D.
$$
In particular,
$$
S_d=D=\overline B_d(q,1).
$$
Thus $S_d$ is clopen and discrete, $d|_{S_d}$ is complete, and $S_d$ is
Baire, whereas $(\CL(Y),\W(d))$ is not Baire.
\end{corollary}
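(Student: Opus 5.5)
We prove the two inclusions $D\subseteq S_e$ and $S_e\subseteq D$ separately for an arbitrary compatible metric $e$ on $Y$, and then specialize to $e=d$.

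\emph{First, $D\subseteq S_e$.} By Lemma~\ref{lem:marker}, every point of $D$ is isolated in $Y$. Hence each singleton $\{p\}$ with $p\in D$ is open. A singleton is trivially $e$-totally bounded, so $\{p\}$ is an open $\sigma$-$e$-totally bounded neighborhood of $p$.

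\emph{Second, $S_e\subseteq D$.} Suppose, towards a contradiction, that some $x\in X$ lies in $S_e$. Take an open $\sigma$-$e$-totally bounded set $U\ni x$. Since $X$ is clopen in $Y$, the set $U\cap X$ is a nonempty open subset of $X$. It is $\sigma$-$e$-totally bounded, because that property is hereditary. As observed after the definition of $S_d$, this makes $U\cap X$ separable. This contradicts Lemma~\ref{lem:cao-tomita-nonseparable}, which says $X$ has no nonempty separable open subset. Separability here is with respect to the topology of $X$, and this topology does not depend on which compatible metric $e$ is used, so the argument is metric-independent.

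\emph{The case $e=d$.} Lemma~\ref{lem:marker} gives $\overline B_d(q,1)=D$. The set $D$ is clopen in $Y$ because $X$ is. It is discrete because all its points are isolated. Distinct points of $D$ have $d$-distance either $1$ or $2$. So $d|_D$ is uniformly discrete: every $d$-Cauchy sequence in $D$ is eventually constant, and $d|_D$ is complete. A discrete space is Baire, since every nonempty open set contains an isolated point and therefore cannot be meagre. Finally, $(\CL(Y),\W(d))$ is not Baire by Theorem~\ref{thm:main}.

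The only substantive input is Lemma~\ref{lem:cao-tomita-nonseparable}, which we take as given. The remaining care point is that $\sigma$-total boundedness implies separability of the subspace, and this was already recorded in the remark following the definition of $S_d$.
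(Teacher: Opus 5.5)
Your proposal is correct and follows the paper's proof essentially step for step: isolated points give $D\subseteq S_e$, a point of $X\cap S_e$ would yield a nonempty separable open subset of $X$ contradicting Lemma~\ref{lem:cao-tomita-nonseparable}, and the case $e=d$ is read off from Lemma~\ref{lem:marker} and Theorem~\ref{thm:main}. Your extra remarks (distances $1$ or $2$ in $D$, Baireness of discrete spaces, metric-independence of the topology) just make explicit what the paper leaves implicit.
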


\begin{proof}
Every point of $D$ is isolated.  Hence its singleton is open and
$e$-totally bounded, and therefore $D\subseteq S_e$.

Suppose that $x\in X\cap S_e$.  There is an open
$\sigma$-$e$-totally bounded set $U\subseteq Y$ containing $x$.  Then
$U\cap X$ is a nonempty open $\sigma$-$(e|_X)$-totally bounded subset of
$X$, and is therefore separable.  This contradicts
Lemma~\ref{lem:cao-tomita-nonseparable}.  Thus $S_e\cap X=\varnothing$, and
$S_e=D$.

For the metric $d$ of Lemma~\ref{lem:marker}, the identity
$D=\overline B_d(q,1)$ was established in its proof.  Distinct points of
$D$ are at $d$-distance at least $1$, so $d|_D$ is complete.  The final
assertion follows from Theorem~\ref{thm:main}.
\end{proof}

\begin{remark}\label{rem:core-sharpness}
In the counterexample, $Y^n$ is Baire for every positive integer $n$, and
$S_d$ is clopen, discrete, and complete in its restricted metric, but
$(\CL(Y),\W(d))$ is not Baire.  Thus the converse of
Theorem~\ref{thm:kernel} fails even for a clopen completely metrizable core,
and the density assumption in Corollary~\ref{cor:dense-kernel} cannot be
omitted.  Moreover, the open subspace used in the construction is
exactly
$$
\CL(X)=(Y\setminus S_d)^+_Y.
$$
Hence the non-Baire open copy of $(\CL(X),\V)$ consists precisely of the
closed subsets of $Y$ that avoid the core.
\end{remark}

\begin{corollary}\label{cor:locally-sigma-precompact}
If $(X,d)$ is locally $\sigma$-$d$-totally bounded, then
$$
(\CL(X),\W(d))\text{ is Baire}
\quad\Longleftrightarrow\quad
X\text{ is Baire}.
$$
\end{corollary}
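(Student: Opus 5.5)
This follows directly from Corollary~\ref{cor:dense-kernel}. The only work is to check its density hypothesis, and in fact to show that the hypothesis holds in the strongest form, namely $S_d=X$.

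First fix the meaning of the assumption. Saying that $(X,d)$ is locally $\sigma$-$d$-totally bounded means that every point $x\in X$ has a neighbourhood $N$ that is $\sigma$-$d$-totally bounded. Since $\sigma$-$d$-total boundedness is hereditary, the interior of $N$ is an open $\sigma$-$d$-totally bounded set containing $x$. By the definition of $S_d$, this gives $x\in S_d$. Hence $S_d=X$, and in particular $S_d$ is dense in $X$.

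Now Corollary~\ref{cor:dense-kernel} applies and yields the equivalence of (1) and (2): $(\CL(X),\W(d))$ is Baire if and only if $X$ is Baire. Here condition (3) coincides with (2), because $S_d=X$.

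There is no genuine obstacle. The only point needing attention is the passage from an arbitrary neighbourhood to an open one. This uses heredity: a subset of a countable union of totally bounded sets is again such a union, since subsets of totally bounded sets are totally bounded.
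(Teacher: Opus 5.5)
Your proposal is correct and matches the paper's proof, which likewise observes that $S_d=X$ and applies Corollary~\ref{cor:dense-kernel}. Your extra step, passing to the interior of a $\sigma$-$d$-totally bounded neighbourhood by heredity, simply spells out what the paper leaves implicit.
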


\begin{proof}
In this case $S_d=X$, so Corollary~\ref{cor:dense-kernel} applies.
\end{proof}

\begin{corollary}\label{cor:sigma-precompact}
If $(X,d)$ is $\sigma$-$d$-totally bounded, then
$$
(\CL(X),\W(d))\text{ is Baire}
\quad\Longleftrightarrow\quad
X\text{ is Baire}.
$$
In particular, the equivalence holds when $(X,d)$ is totally bounded or when
$X$ is countable.
\end{corollary}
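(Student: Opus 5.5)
The plan is to reduce the statement to Corollary~\ref{cor:locally-sigma-precompact}, which is in turn an instance of Corollary~\ref{cor:dense-kernel}. If $(X,d)$ is $\sigma$-$d$-totally bounded, write $X=\bigcup_{n<\aleph_0}M_n$ with each $M_n$ $d$-totally bounded. Then $X$ itself is an open $\sigma$-$d$-totally bounded subset of $X$. So by the definition of the core, $S_d=X$; in particular $S_d$ is dense and $(X,d)$ is locally $\sigma$-$d$-totally bounded. Corollary~\ref{cor:dense-kernel} (or directly Corollary~\ref{cor:locally-sigma-precompact}) then gives
$$
(\CL(X),\W(d))\text{ Baire}\iff X\text{ Baire}\iff S_d=X\text{ Baire},
$$
which is the claimed equivalence. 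Equivalently: $X$ is itself an open $\sigma$-$d$-totally bounded set, hence a neighbourhood of each of its points, so the hypothesis of Corollary~\ref{cor:locally-sigma-precompact} holds.

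For the two special cases, I would check the hypothesis directly.
\begin{itemize}
\item If $(X,d)$ is totally bounded, take $M_n=X$ for every $n$.
\item If $X$ is countable, write $X=\{x_n:n<\aleph_0\}$, or a finite list, and take $M_n=\{x_n\}$. Singletons are trivially $d$-totally bounded. The empty space is harmless: it is Baire, and $\CL(\varnothing)=\varnothing$ is Baire vacuously.
\end{itemize}
In both cases $X$ is $\sigma$-$d$-totally bounded for every metric $d$ on $X$. For countable $X$ the equivalence therefore holds for every compatible metric, with no choice of metric involved.

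I do not expect a real obstacle, since all the substantive work already sits in Proposition~\ref{prop:sigma-obstruction} and Theorem~\ref{thm:zsilinszky}(2). The only point to state explicitly is the identification $S_d=X$. That identification needs nothing beyond the definition of $S_d$ as the union of all open $\sigma$-$d$-totally bounded sets, together with the fact that $X$ is open in itself.
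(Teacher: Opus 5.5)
Your proposal is correct and follows the paper's own proof. In both, $X$ is itself an open $\sigma$-$d$-totally bounded set, so $S_d=X$ is dense and Corollary~\ref{cor:dense-kernel} applies, and the countable case is handled by writing $X$ as a countable union of singletons. Your extra remarks on the totally bounded and empty cases are harmless details that the paper leaves implicit.
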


\begin{proof}
Here $X$ itself is an open $\sigma$-$d$-totally bounded set, so $S_d=X$ and
Corollary~\ref{cor:dense-kernel} applies.  A countable metric space is a
countable union of singletons, each of which is totally bounded.
\end{proof}

\begin{corollary}\label{cor:sigma-compact}
If $X$ is a $\sigma$-compact metrizable space, then for every compatible metric
$d$,
$$
(\CL(X),\W(d))\text{ is Baire}
\quad\Longleftrightarrow\quad
X\text{ is Baire}.
$$
\end{corollary}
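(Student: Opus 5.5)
The plan is to reduce the statement to Corollary~\ref{cor:sigma-precompact} by showing that a $\sigma$-compact metrizable space is $\sigma$-$d$-totally bounded for every compatible metric $d$. Write $X=\bigcup_{n<\aleph_0}K_n$ with each $K_n$ compact. For a fixed compatible metric $d$, each $K_n$ is compact in the topology induced by $d$, hence $d$-totally bounded. Indeed, for any $\varepsilon>0$ the open $d$-balls of radius $\varepsilon$ centred at points of $K_n$ cover $K_n$, and compactness yields a finite subcover. Therefore $X$ is a countable union of $d$-totally bounded sets, that is, $X$ is $\sigma$-$d$-totally bounded.

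Consequently $X$ itself is an open $\sigma$-$d$-totally bounded set, so $S_d=X$, and in particular $S_d$ is dense in $X$. Corollary~\ref{cor:sigma-precompact}, or directly Corollary~\ref{cor:dense-kernel}, then gives
$$
(\CL(X),\W(d))\text{ is Baire}\quad\Longleftrightarrow\quad X\text{ is Baire}.
$$
Since $d$ was an arbitrary compatible metric, the equivalence holds for every compatible metric $d$.

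There is no genuine obstacle here. The only point to check is that compactness is a topological notion while total boundedness depends on the metric; the finite-subcover argument above bridges the two for any compatible $d$. The substantive work is already contained in Proposition~\ref{prop:sigma-obstruction} and Theorem~\ref{thm:zsilinszky}(2), which are used through Corollary~\ref{cor:dense-kernel}.
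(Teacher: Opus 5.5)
Your proposal is correct and follows the paper's own proof: compact sets are totally bounded for every compatible metric, so $X$ is $\sigma$-$d$-totally bounded and $S_d=X$. Corollary~\ref{cor:sigma-precompact} then gives the equivalence.
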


\begin{proof}
Every compact subset of a metric space is totally bounded.  Hence a
$\sigma$-compact metric space is $\sigma$-$d$-totally bounded for every
compatible metric $d$, and Corollary~\ref{cor:sigma-precompact} applies.
\end{proof}

\section{Concluding remarks}
Theorem~\ref{thm:main} settles Question~4.4 of Cao's survey and gives a
negative answer to the question posed by Zsilinszky at the 10th Prague
Topological Symposium in 2006, recorded there as Question~4.5.  The construction shows, in particular, that finite-power Baireness does not control the
Baire property of a Wijsman hyperspace.  Together with Cao's theorem for the
countable power, it gives a precise separation between finite powers and
$X^{\aleph_0}$.

The core $S_d$ provides a complementary positive result.  Whenever $S_d$ is
dense in $X$, Baireness of the Wijsman hyperspace is equivalent to Baireness
of the base space.  The counterexample shows that the density assumption is
sharp in a strong form.  For every compatible metric $e$ on the auxiliary-point space
$Y$, its core is exactly the discrete auxiliary component $D$.  For the metric
$d$ used to realize the Vietoris hyperspace, $S_d=D$ is clopen and complete
in its restricted metric, while $(\CL(Y),\tau_W(d))$ is not Baire.  In
addition, the open copy of the non-Baire Vietoris hyperspace is
$\CL(X)=(Y\setminus S_d)^+_Y$, so the obstruction is carried by closed
sets that avoid the core altogether.


\begin{thebibliography}{99}
\small

\bibitem{Cao2010}
J.~Cao,
\emph{The Baire property in hit-and-miss hypertopologies},
Topology Appl. \textbf{157} (2010), no.~8, 1325--1334.
\href{https://doi.org/10.1016/j.topol.2009.03.053}{doi:10.1016/j.topol.2009.03.053}.

\bibitem{Cao2017}
J.~Cao,
\emph{Three open problems on the Wijsman topology},
Questions Answers Gen. Topology \textbf{35} (2017), no.~1, 49--58.
\href{https://doi.org/10.82665/qagt.35.1_49}{doi:10.82665/qagt.35.1\_49}.

\bibitem{CaoGarciaFerreiraGutev2007}
J.~Cao, S.~Garc\'ia-Ferreira and V.~Gutev,
\emph{Baire spaces and Vietoris hyperspaces},
Proc. Amer. Math. Soc. \textbf{135} (2007), no.~1, 299--303.
\href{https://doi.org/10.1090/S0002-9939-06-08743-0}{doi:10.1090/S0002-9939-06-08743-0}.

\bibitem{CaoGarciaFerreiraGutev2008}
J.~Cao, S.~Garc\'ia-Ferreira and V.~Gutev,
\emph{Erratum to ``Baire spaces and Vietoris hyperspaces''},
Proc. Amer. Math. Soc. \textbf{136} (2008), no.~10, 3729--3731.
\href{https://doi.org/10.1090/S0002-9939-08-09291-5}{doi:10.1090/S0002-9939-08-09291-5}.

\bibitem{CaoTomita2007}
J.~Cao and A.~H.~Tomita,
\emph{Baire spaces, Tychonoff powers and the Vietoris topology},
Proc. Amer. Math. Soc. \textbf{135} (2007), no.~5, 1565--1573.
\href{https://doi.org/10.1090/S0002-9939-07-08855-7}{doi:10.1090/S0002-9939-07-08855-7}.

\bibitem{GarciaFerreiraGutev2009}
S.~Garc\'ia-Ferreira and V.~Gutev,
\emph{Baire property and web-adjacent spaces},
Houston J. Math. \textbf{35} (2009), no.~3, 857--875.

\bibitem{Zsilinszky2007}
L.~Zsilinszky,
\emph{On Baireness of the Wijsman hyperspace},
Boll. Unione Mat. Ital. (8) \textbf{10-B} (2007), no.~3, 1071--1079.
\href{https://eudml.org/doc/290370}{EuDML 290370}.

\end{thebibliography}
\end{document}